\newcommand{\keywords}[1]{\par\addvspace\baselineskip
\noindent\keywordname\enspace\ignorespaces#1}
\newcommand{\bx}{\mathbf{x}}
\newcommand{\bn}{\mathbf{n}}
\newcommand{\bR}{\mathbf{R}}
\newcommand{\OmegaBF}{\mbox{\boldmath$\Omega$}}
\newcommand{\by}{\mathbf{y}}
\newcommand{\bc}{\mathbf{c}}
\newcommand{\tW}{\tilde{W}}
\newcommand{\be}{\mathbf{e}}
\newcommand{\cA}{\mathcal{A}}
\newcommand{\tp}{\tilde{p}}
\newcommand{\tU}{\tilde{U}}
\newcommand{\mA}{\mathcal{A}}
\newcommand{\quotpo}{\mathbb{R}^3 \rtimes S^2}
\newcommand{\tPhi}{\tilde{\Phi}}
\newcommand{\mL}{\mathcal{L}}
\newcommand{\tK}{\tilde{K}}
\newcommand{\desda}{\Leftrightarrow}
\newcommand{\ul}{\mathbf}
\newcommand{\R}{\mathbb{R}}
\begin{document}

\mainmatter  

\title{New Approximation of a Scale Space Kernel on SE(3) and Applications in Neuroimaging}
\titlerunning{New Approximation of a Scale Space Kernel on SE(3)}

%
%
\author{Jorg Portegies$^1$%
\and Gonzalo Sanguinetti$^1$\and Stephan Meesters$^{1,2}$ \and \\Remco Duits$^{1,3}$ \thanks{\small
The research leading to the results of this article has received funding from the European Research Council under the EC’s 7th Framework Programme (FP7/2007– 2014)/ERC grant agreement No. 335555.}
}
\authorrunning{New Approximation of a Scale Space Kernel on SE(3)}

\institute{$^1$Dept. of Mathematics and Computer Science, Eindhoven University of Technology, \\ $^2$Academic Center for Epileptology \& Maastricht UMC+, Heeze \\ $^3$Dept. of Biomedical Engineering, Eindhoven University of Technology, \\
\mailsa\\
}

%
%

\toctitle{Lecture Notes in Computer Science}
\tocauthor{Authors' Instructions}
\maketitle

\begin{abstract}
We provide a new, analytic kernel for scale space filtering of dMRI data. The kernel is an approximation for the Green's function of a hypo-elliptic diffusion on the 3D rigid body motion group SE(3), for fiber enhancement in dMRI. The enhancements are described by linear scale space PDEs in the coupled space of positions and orientations embedded in SE(3). As initial condition for the evolution we use either a Fiber Orientation Distribution (FOD) or an Orientation Density Function (ODF). Explicit formulas for the exact kernel do not exist. Although approximations well-suited for fast implementation have been proposed in literature, they lack important symmetries of the exact kernel. We introduce techniques to include these symmetries in approximations based on the logarithm on SE(3), resulting in an improved kernel. Regarding neuroimaging applications, we apply our enhancement kernel (a) to improve dMRI tractography results and (b) to quantify coherence of obtained streamline bundles.
\keywords{Scale Space on SE(3), Contextual enhancement, Left-invariant Diffusion, Group convolution, Tractography.}
\end{abstract}

\section{Introduction}

In dMRI it is assumed that axons grouped in bundles in human brain tissue restrict the Brownian motion of water molecules such that more diffusion occurs along the bundles \cite{leBihan1986}. By measuring the decay of signal due to diffusion in many directions it is possible to obtain information about the underlying microstructure of the brain tissue and further processing of this data provides clues about the anatomical brain connectivity. After a pre-processing procedure in which the raw data is corrected for e.g. distortions and motion artefacts, different models can be used for further processing of the data \cite{Descoteaux2014}. We construct from the data a fiber orientation distribution (FOD) function on positions and orientations, representing the probability density of finding a fiber in a certain position and orientation. For this we use Constrained Spherical Deconvolution (CSD) \cite{Tournier2007}, but the methods in this paper can be combined with any model that outputs an FOD or an Orientation Density Function (ODF) of water molecules. 

\begin{figure}[t!]
  \centering
  \includegraphics[width=0.85\textwidth]{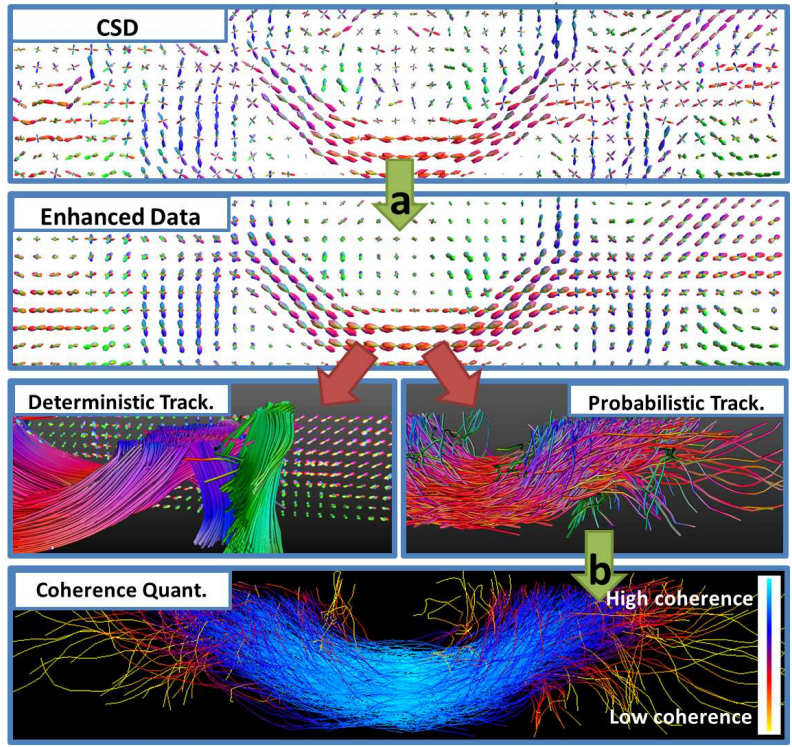}
\caption{The pipeline used in this paper for processing dMRI data. CSD provides an FOD, that can be enhanced by convolution with our proposed kernel as in Section \ref{se:applications}\textbf{a}. We visualize the FOD $U$ by a field of \emph{glyphs}, where at each position $\by$ on a grid and each orientation $\bn$ the radius is of the glyph is directly proportional to $U(\by,\bn)$. The color corresponds to direction of the orientation. Then, deterministic or probabilistic tractography produces fiber bundles, that often contain deviating fibers. They can be classified with our proposed coherence quantification, see Section \ref{se:applications}\textbf{b}.}\label{fig:pipeline}
\end{figure}

For regularization of dMRI data, various methods exist that include contextual information on position and/or orientation space \cite{tschumperle2002,Burgeth2009a,Duits2011,Reisert2011,schultz2012towards,Momayyez2013,Duits2013,becker2014,batard2014}. In this paper we pursue the method of scale spaces on the group of positions and rotations $SE(3)$. For this we consider diffusions described by the Fokker-Planck equations of hypo-elliptic Brownian motion \cite[Sect. 4.2]{Duits2011}. The effect of these evolutions on the FOD is that elongated structures are enhanced while crossings of bundles are preserved. No explicit formulas exist for the exact Green's function of the PDE, but approximations exist in literature \cite{Duits2011}: one is the product of two SE(2)-kernels and one is based on the logarithmic modulus on $SE(3)$. However, two important invariance properties of the exact kernel are not automatically obeyed in these approximations, as pointed out in Section \ref{se:kernelapprox}. 

The first (and main) contribution of this paper is that we provide a new analytic kernel approximation for Brownian motion on $SE(3)$ that is, in contrast to previous analytic approximations, well-defined on the quotient. As such, it carries the appropriate symmetry. Furthermore, the novel, more precise approximation is still well-suited for fast kernel implementations \cite{rodrigues2010}. 

The second contribution in this work is the application of the improved approximations in two different places (\textbf{a}) and (\textbf{b}) in the dMRI pipeline as in Fig.~\ref{fig:pipeline}. We provide two clinically relevant experiments to illustrate this. We apply the enhancements to the FOD and show the advantages in the tractography result \textbf{(a)}. Secondly, we use the enhancement kernel to construct a density on the space of positions and orientations, based on a set of fibers \textbf{(b)}. From this we derive a measure for the coherence of fibers within a fiber bundle. The symmetry included in the new kernel leads to a reduction in computation time of the measure.

Section \ref{se:methods} covers the theory of the paper and compares previous and new approximation kernels. In Section \ref{se:applications} the applications \textbf{(a)} and \textbf{(b)} are presented.

\section{Approximations of Scale Spaces on $SE(3)$}\label{se:methods}
First we explain in Section \ref{se:embeddingse3} how the space of 3D positions and orientations, on which FODs are defined, can be embedded in the group of 3D rigid body motions $SE(3)$. This is followed by a brief introduction to scale spaces on $SE(3)$, see Section \ref{se:scalespaces}, together with a discussion of the two required invariances. In Section \ref{se:kernelapprox} we give two known kernel approximations and we propose an adaptation for the logarithmic approximation, such that the desired invariances are induced.

\subsection{The Embedding of $\mathbb{R}^3 \rtimes S^2$ into $SE(3)$}\label{se:embeddingse3}
Contextual enhancement of a function $U : \mathbb{R}^3 \times S^2 \rightarrow \mathbb{R}^+$, representing an FOD, means improving the alignment of elongated structures present in the FOD. Such alignment can only be done in a space where positions and orientations are coupled. Therefore we embed $\mathbb{R}^3 \times S^2$ in the group of 3D rigid body motions $SE(3)$, with group product $(\bx,\bR)(\bx',\bR') = (\bx + \bR \bx', \bR \bR')$, where $\bR, \bR' \in SO(3)$, the 3D rotation group, and $\bx,\bx'\in \mathbb{R}^3$. Square integrable functions $U$ relate to a specific set of functions $\tilde{U}:SE(3)\rightarrow \mathbb{R}^+$ via
\begin{equation}
U \leftrightarrow \tilde{U}
\desda
\left\{
\begin{array}{l}
U(\by,\bn) = \tU(\by,\bR_{\bn}) \textrm{ and } \tU(\by,\mathbf{R})=U(\by,\mathbf{R}\mathbf{e}_{z})
\end{array}
\right\},
\end{equation}
where from now on, $\bR_{\bn}$ denotes \emph{any} rotation such that $\bR_{\bn} \be_z = \bn$, and where $\ul{e}_{z}=(0,0,1)^T$ denotes our reference axis.
The functions $\tilde{U}$ carry a symmetry, as they are right-invariant w.r.t. right-action
$\ul{R} \mapsto \ul{R} \bR_{\be_z,\alpha}$, where from now on $\bR_{\ul{a},\psi}$ denotes the counter-clockwise rotation around axis $\ul{a}$ by angle $\psi$.
This implies we should consider left cosets on $SE(3)$, with equivalence relation
\begin{equation}
\begin{array}{l}
g:=(\by,\bR) \equiv g':=(\by',\bR') \desda g^{-1} g' \in H \desda
\by = \by' \textrm{ and }\bR' = \bR \bR_{\be_z,\alpha},
\end{array}
\end{equation}
for some $ \alpha \in [0,2\pi)$, where from now on $H$ denotes the subgroup of $SE(3)$ whose elements are equal to $h=(\ul{0},R_{\ul{e}_{z},\alpha})$.
The total set of functions $\tilde{U}$, mentioned earlier, is given by:
\[
\mathbb{L}_{2}^{R}(SE(3)):= \{\tilde{U} \in \mathbb{L}_{2}(SE(3))\;|\; \tilde{U}(gh)=\tU(g) \textrm{ for all }g \in SE(3), h \in H\}.
\]
From now on, we identify the function $U:\mathbb{R}^3 \times S^2 \rightarrow \mathbb{R}$ with a function on the group quotient $\mathbb{R}^3 \rtimes S^2 := SE(3)/H = SE(3) / (\{\mathbf{0}\}\times SO(2))$.  Throughout the paper we use a tilde to distinguish functions on the group from functions on the quotient. Next, we present scale spaces for such functions $\tU$ and $U$.

\subsection{Scale Spaces and Group Convolution on $\mathbb{R}^3 \rtimes S^2$ and $SE(3)$}\label{se:scalespaces}
We define the enhancement evolutions on $SE(3)$ in terms of the left-invariant vector fields. They can be considered as differential operators on locally defined smooth functions \cite{aubin2001}. Left-invariant vector fields are obtained from a Lie-algebra basis for the tangent space $T_e(SE(3))$ at unity element $e = (0,I) \in SE(3)$, say $\{A_1,\dots,A_6\}$. This is done with the pushforward $(L_g)_*$ of the left-multiplication $L_g q = gq$: $\mA_g \tilde{U} = \mA_e(\tilde{U} \circ L_g) = ((L_g)_* \mA_e)(\tilde{U})$, for all smooth $\tilde{U}: SE(3) \rightarrow \mathbb{R}$. Explicit formulas for the vector fields $g \mapsto \left.\mA_i \right|_g$ can be obtained by 
\begin{equation}\label{eq:Ai}
\mA_j|_g \tilde{U} = \lim_{t\rightarrow0} \frac{\tilde{U}(g\, e^{tA_j}) - \tU(g)}{t}, \qquad j = 1,\dots,6,
\end{equation}
where $T_e(SE(3)) \ni A \mapsto e^{tA} \in SE(3)$ denotes the exponential map yielding the 1-parameter subgroup $\{e^{tA} \; \vline \; t\in \mathbb{R}\}$. Note that $\mA_i|_e = A_i$. We write $\{\mA_i\}_{i=1}^6$, where $\{\mathcal{A}_1,\mathcal{A}_2,\mathcal{A}_3\}$ are spatial and $\{\mathcal{A}_{4},\mathcal{A}_{5},\mathcal{A}_6\}$ are rotation vector fields, for which we use the explicit expressions in two different Euler angle parametrizations as given in \cite{Duits2011}. For geometric intuition, see Fig.~\ref{fig:intuition}.
\begin{figure}[t!]
  \centering
  \includegraphics[width=0.95\textwidth]{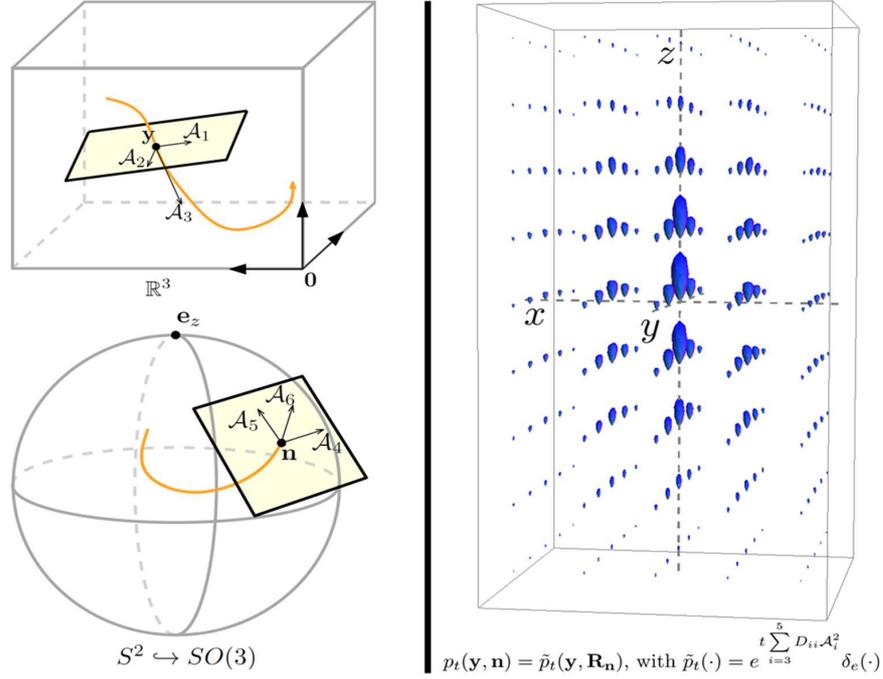}
\caption{\emph{Left:} left-invariant vector fields $\{\mA_i\}_{i=1}^6$ in $SE(3)$, restricted to $(\by,\bR_{\bn})$. \emph{Right:} glyph visualization of the impulse response $p_t$ as the scale space kernel of (\ref{eq:CEquotient}), with $D_{33} = 1$, $D_{44} = D_{55}=0.02$, $t = 2$.}\label{fig:intuition}
\end{figure} 
Now the (hypo-elliptic) diffusion process describing Brownian motion on $SE(3)$ \cite{Duits2011}, for $\tU$ on $SE(3)$ can be formulated as:
\begin{equation}\label{eq:CESE(3)}
\left\{ \begin{array}{rl}
 \frac{\partial }{\partial t}\tW(\by,\bR,t) &= D_{33} (\cA_3)^2 \tW(\by,\bR,t) + D_{44} (\cA_4^2 + \cA_5^2) \tW(\by,\bR,t),\\[4pt]
 \tW(\by,\bR,0) &= \tU(\by,\bR).
\end{array} \right.
\end{equation}
This yields a scale space representation $\tW$ of the function $\tU$ with scale parameter $t>0$. Parameters $D_{33}>0$ and $D_{44}>0$ influence the amount of spatial and angular diffusion, respectively. Solutions for $\tW$ can be found via finite difference approximations of the PDE \cite{Creusen2011}, or via convolution $\tU$ with a kernel. Fast kernel implementations exist \cite{rodrigues2010} and are particularly suitable for our applications. The $SE(3)$-convolution with a probability kernel $\tp_t:SE(3) \rightarrow \mathbb{R}^+$ is given by
\begin{equation}
\tW(g,t) = (\tp_t \ast_{SE(3)} \tU)(g) = \int_{SE(3)} \tp_t(q^{-1}g) \tU(q) {\rm d}q.
\end{equation}
where ${\rm d}q={\rm d}\ul{x}{\rm d}R$, representing the Haar measure, for all $q=(\ul{x},R) \in SE(3)$.
This evolution is well-defined also on $\quotpo$ and can be written as
\begin{equation}\label{eq:CEquotient}
\left\{ \begin{array}{l}
 \frac{\partial}{\partial t}W(\by,\bn,t) = D_{33} (\ul{n} \cdot \nabla)^2 W(\by,\bn,t) + D_{44} \Delta_{LB} W(\by,\bn,t), \\
 W(\by,\bn,0) = U(\by,\bn),
\end{array} \right.
\end{equation}
with $\Delta_{LB}$ the Laplace-Beltrami operator on $S^2$. Solutions of (\ref{eq:CEquotient}) are found by $\mathbb{R}^{3}\rtimes S^{2}$-convolution with the exact solution kernel $p_t:\mathbb{R}^{3}\rtimes S^{2} \to \mathbb{R}^{+}$:
\begin{equation}\label{eq:shifttwistconv}
\begin{array}{ll}
  W(\by,\bn,t) &= \left(p_t \ast_{\mathbb{R}^3 \rtimes S^2} U  \right)(\by,\bn)\\
  &= \int \limits_{S^2} \int \limits_{\mathbb{R}^3} p_t (\bR_{\bn'}^T(\by - \by'), \bR_{\bn'}^T \bn) \cdot U(\by', \bn') {\rm d} \by' {\rm d} \sigma(\bn').
  \end{array}
\end{equation}
The kernels $\tilde{p}_t$ and $p_t$ should satisfy certain symmetries, as shown in the following definition, lemma and corollary.
\begin{definition}\label{def:legaloperator}
An operator $\tilde{\Phi}:\mathbb{L}_2^R(SE(3)) \rightarrow \mathbb{L}_2^R(SE(3))$ is called \emph{legal} if 
\[
\left\{ \begin{array}{ll} \mathcal{L}_g \circ \tilde{\Phi} = \tilde{\Phi} \circ \mathcal{L}_g & \qquad \forall g \in SE(3), \\
\tilde{\Phi} \circ R_h = \tilde{\Phi} = R_{h'} \circ \tilde{\Phi} & \qquad \forall h,h' \in H,
\end{array} \right.
\]
with $\mathcal{L}_g$ and $R_h$ the left- and right-regular action on SE(3) respectively, see \cite{Duits2011}.
\end{definition}
Legal operators $\tilde{\Phi}$ induce well-posed operators $\Phi$ on $\mathbb{L}_{2}(\mathbb{R}^{3} \rtimes S^{2})$ via
$\Phi(U)(\ul{y},\ul{n})= \tilde{\Phi} (\tilde{U})(\ul{y}, \bR_{\ul{n}})$. In particular, this holds for legal scale space operators.
\begin{lemma}\label{le:lemma}
Let $\tilde{\Phi}:\mathbb{L}_2^R(SE(3)) \rightarrow \mathbb{L}_2^R(SE(3))$ be linear and legal, and assume it maps $\mathbb{L}_2(SE(3))$ into $\mathbb{L}_{\infty}(SE(3))$. Then we have:
\begin{enumerate}
\item identity: $(\tilde{\Phi} (\tU))(g) = \int \limits_{SE(3)} \tilde{K}(g,q)\,\tU(q)\, {\rm d} q = (\check{\tp} \ast_{SE(3)} \tU)(g),
\textrm{ with } \\ \tilde{K}(g,q) = \tilde{K}(e,(g^{-1}q)) =:\check{\tp}(q^{-1}g)$ and with $\tilde{p}(g):=\tilde{K}(g,e)=\check{\tilde{p}}(g^{-1})$.\mbox{}\vspace{0.2cm}\mbox{}
\item symmetry: $
\tilde{K}(gh, qh') = \tilde{K}(g,q)\textrm{ for all } h, h' \in H$ and all $g \in SE(3)$.
\item preservation of correspondences:
$(U \leftrightarrow \tU) \Rightarrow \Phi(U) \leftrightarrow \tilde{\Phi}(\tU)$ with
\begin{equation}\label{int}
\begin{array}{lll}
(\Phi (U))(\by,\bn) &= \int \limits_{\mathbb{R}^3 \times S^2} k(\by,\bn,\by',\bn') U(\by',\bn') {\rm d} \by'{\rm d} \sigma(\bn'), \qquad \textrm{ with } \\
k(\by,\bn,\by',\bn') &= \frac{1}{2\pi}\tilde{K}((\by,\bR_{\bn}),(\by',\bR_{\bn'})) \\
 &= \frac{1}{2\pi}\tp((\by',\bR_{\bn}')^{-1}(\by,\bR_{\bn})) = p(\bR_{\bn'}^{-1}(\by-\by'),\bR_{\bn'}^{-1}\bn).
\end{array}
\end{equation} 
\end{enumerate}
\end{lemma}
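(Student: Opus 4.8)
The plan is to handle the three claims in order, producing the integral kernel first by abstract functional analysis and then reading off its two invariances directly from the two legality conditions of Definition~\ref{def:legaloperator}. Throughout I would use that $SE(3)$ is unimodular, so that the Haar measure ${\rm d}q$ is invariant under both left and right translations.

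For the identity (1), I would first note that, being linear with $\tilde{\Phi}(\mathbb{L}_2(SE(3)))\subset\mathbb{L}_\infty(SE(3))$, the operator $\tilde{\Phi}$ is bounded $\mathbb{L}_2(SE(3))\to\mathbb{L}_\infty(SE(3))$ (closed graph). Fixing $g$, the evaluation $\tU\mapsto(\tilde{\Phi}\tU)(g)$ is then a bounded linear functional on $\mathbb{L}_2(SE(3))$, so Riesz representation yields a $\tK(g,\cdot)\in\mathbb{L}_2(SE(3))$ with $(\tilde{\Phi}\tU)(g)=\int_{SE(3)}\tK(g,q)\tU(q)\,{\rm d}q$; this is the Dunford--Pettis / Schwartz-kernel step. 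To extract the convolution structure I would feed the left-covariance $\mL_g\circ\tilde{\Phi}=\tilde{\Phi}\circ\mL_g$ into this representation: writing $(\tilde{\Phi}\mL_g\tU)(q)=(\mL_g\tilde{\Phi}\tU)(q)$, substituting $(\mL_g\tU)(s)=\tU(g^{-1}s)$, and changing variables by left-invariance of ${\rm d}q$, one obtains $\tK(ga,gb)=\tK(a,b)$ for all $g,a,b$. Left-translating both slots of $\tK(g,q)$ by $g^{-1}$ then gives $\tK(g,q)=\tK(e,g^{-1}q)$, which is exactly the convolution form; identifying this with $\check{\tp}(q^{-1}g)$ and $\tp(g)=\tK(g,e)$ is only a matter of unwinding the reflection/inverse notation.

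For the symmetry (2) I would use the two right-$H$ conditions separately. From $R_{h'}\circ\tilde{\Phi}=\tilde{\Phi}$, evaluation at $g$ gives $(\tilde{\Phi}\tU)(gh')=(\tilde{\Phi}\tU)(g)$ for all $\tU$, whence $\tK(gh',q)=\tK(g,q)$ (invariance in the first slot). From $\tilde{\Phi}\circ R_h=\tilde{\Phi}$, writing $(\tilde{\Phi}R_h\tU)(g)=(\tilde{\Phi}\tU)(g)$, substituting $(R_h\tU)(q)=\tU(qh)$ and translating the integration variable $q\mapsto qh^{-1}$ (right-invariance of ${\rm d}q$), gives $\tK(g,qh)=\tK(g,q)$ (invariance in the second slot). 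Composing the two yields $\tK(gh,qh')=\tK(g,q)$ for all $h,h'\in H$.

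For the correspondence (3) I would use that a legal $\tilde{\Phi}$ maps into $\mathbb{L}_2^R(SE(3))$, so $\tilde{\Phi}(\tU)$ is right-$H$-invariant and descends to the quotient via $\Phi(U)(\by,\bn):=\tilde{\Phi}(\tU)(\by,\bR_{\bn})$, independently of the choice of $\bR_{\bn}$; this is precisely both directions of $\Phi(U)\leftrightarrow\tilde{\Phi}(\tU)$. Inserting $g=(\by,\bR_{\bn})$ into the kernel identity, writing $q=(\by',\bR')$ and parametrising $\bR'=\bR_{\bn'}\bR_{\be_z,\alpha'}$, I would note that both $\tU$ (as an element of $\mathbb{L}_2^R$) and $\tK$ in its second slot (by part~2) are invariant under $\bR'\mapsto\bR'\bR_{\be_z,\alpha'}$, so the integrand is independent of $\alpha'$; integrating out $\alpha'$ produces the normalisation $\tfrac{1}{2\pi}$ and reduces the $SE(3)$ integral to the $\quotpo$ integral with $k=\tfrac1{2\pi}\tK((\by,\bR_{\bn}),(\by',\bR_{\bn'}))$. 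Finally I would apply part~1 and compute $(\by',\bR_{\bn'})^{-1}(\by,\bR_{\bn})=(\bR_{\bn'}^{-1}(\by-\by'),\bR_{\bn'}^{-1}\bR_{\bn})$, using that on the quotient $p$ depends on the rotational factor only through $\bR_{\bn'}^{-1}\bn$, to reach the stated shift-twist form. The genuine obstacle is the existence step in (1): making the pointwise Riesz representation rigorous requires choosing an everywhere-defined representative of $\tilde{\Phi}\tU$ so that evaluation at $g$ is a legitimate bounded functional, and controlling joint measurability of $(g,q)\mapsto\tK(g,q)$ --- which is exactly where the hypothesis $\tilde{\Phi}(\mathbb{L}_2)\subset\mathbb{L}_\infty$ is essential; all subsequent manipulations are routine changes of variables driven by invariance of the Haar measure.
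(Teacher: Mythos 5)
Your proposal is correct and follows essentially the same route as the paper: kernel existence via Dunford--Pettis (which you unpack as closed graph plus pointwise Riesz representation), left-covariance giving $\tK(g,q)=\tK(e,g^{-1}q)$ and hence the convolution form, the two right-$H$ conditions plus unimodularity giving the two-slot $H$-invariance, and reduction to the quotient by integrating out the fibre angle to produce the $\tfrac{1}{2\pi}$ factor. The extra detail you supply (the explicit parametrisation $\bR'=\bR_{\bn'}\bR_{\be_z,\alpha'}$ and the measurability caveat) is consistent with, and merely fills in, what the paper leaves implicit.
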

\begin{proof}
By the Dunford-Pettis Theorem cf.~\cite{arendt1994}, $\tPhi$ is a kernel operator and $\|\tPhi \|^2=\sup \limits_{q \in SE(3)} \int \limits_{SE(3)}
|\tilde{K}(g,q)|^2{\rm d}q$, so we can write 
$(\tPhi(\tU))(g) = \int \limits_{SE(3)} \tK(g,q) \tU(q) {\rm d} q$. \\
1. Operator $\tPhi$ is legal, so from the first identity in Definition \ref{def:legaloperator} we have
\begin{equation}\label{eq:leftinv}
\forall_{g' \in SE(3)}\forall_{\tU \in \mathbb{L}_{2}^{R}(SE(3))}\;:\;
(\tPhi \circ \mL_g \circ \tU)(g') = (\mL_g \circ \tPhi \circ \tU)(g').
\end{equation}
This holds for all $\tU$ and by writing (\ref{eq:leftinv}) in integral form, one can check that
\begin{equation}
\begin{array}{l}
\tK(g^{-1}g',q) = \tK(g',g q),  \
\tK(e,g^{-1}q) = \tK(g,q), \textrm{ for all }g,g',q \in SE(3).
\end{array}
\end{equation}
2. The second identity in Definition \ref{def:legaloperator}, together with the unimodularity of the group $SE(3)$ gives $\tK(g, q h^{-1}) = \tK(g,q) = \tK(g h',q)$, so $\tK$ must be right-invariant under subgroup $H$ with respect to both entries.\\
3. Suppose $U \leftrightarrow \tU$, then $\tilde{U}(\by',R_{\bn'})=U(\by',\ul{n}')$ and
\begin{equation}
\begin{array}{ll}
(\tPhi(\tU)) (\by,\bR_{\ul{n}}) &= \int \limits_{SE(3)} \tK(\by,\bR_{\ul{n}},\by',\bR')\tU(\by',\bR') {\rm d} \by'{\rm d} \bR' \\
 &= 2\pi \int \limits_{\quotpo} \tK(\by,\bR_{\ul{n}} ,\by',\bR_{\bn'})\, \tilde{U}(\by',R_{\bn'})\, {\rm d} \by' {\rm d} \sigma(\bn').
 \end{array}
\end{equation}

Now $\tilde{p}(g)=\tilde{K}(g,e)$, $\check{\tilde{p}}(g)=\tilde{K}(e,g)$. Left-invariance then implies (\ref{int}). $\hfill \Box$
\end{proof}
\begin{corollary}\label{corr:1}
The exact scale space kernels $\tilde{p}_{t}:SE(3) \to \R^{+}$ and $p_{t}:\R^{3}\rtimes S^{2} \to \R^+$ satisfy the following symmetries
\begin{equation}\label{eq:symm1}
p_{t}(\ul{y},\ul{n})= p_{t}(\bR_{\ul{e}_{z},\alpha}\ul{y},\bR_{\ul{e}_{z},\alpha}\ul{n}) \textrm{ and } \tilde{p}_{t}(h^{-1}g)=\tilde{p}_{t}(g)=\tilde{p}_{t}(gh')
\end{equation}
for all $t>0$, $\alpha>0$, $(\ul{y},\ul{n}) \in \R^{3}\rtimes S^{2}$, $g \in SE(3)$, $h,h' \in H$. Moreover, 
\begin{equation}\label{eq:symm2}
 p_t(\by,\bn) = p_t(-\bR_{\bn}^T \by, \bR_{\bn}^T \be_z) \textrm{ and } \tp_t(g) = \tp_t(g^{-1}).
\end{equation}
\end{corollary}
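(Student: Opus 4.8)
The plan is to realise the scale space as the convolution semigroup $\tPhi_t := e^{t\mathcal{Q}}$ generated by $\mathcal{Q} := D_{33}\cA_3^2 + D_{44}(\cA_4^2+\cA_5^2)$, whose exact kernel is $\tp_t$ (so that $\tp_t(g)=\tK_t(g,e)$), and then to read off the two families of symmetries from (i) legality of $\tPhi_t$, which yields (\ref{eq:symm1}), and (ii) self-adjointness of $\tPhi_t$, which yields (\ref{eq:symm2}). Throughout I will use the correspondence $p_t(\ul{y},\ul{n}) = \frac{1}{2\pi}\tp_t(\ul{y},\bR_{\ul{n}})$, obtained from Lemma~\ref{le:lemma}.3 by taking $\by'=\ul{0}$, $\bn'=\be_z$ and $\bR_{\bn'}=I$, which reduces every statement about $p_t$ to one about $\tp_t$.

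First I would prove that $\tPhi_t$ is legal. Left-covariance $\mL_g\circ\tPhi_t = \tPhi_t\circ\mL_g$ is immediate because $\mathcal{Q}$ is a polynomial in the left-invariant fields $\cA_3,\cA_4,\cA_5$, which commute with every $\mL_g$, hence so does $e^{t\mathcal{Q}}$. For the right-$H$ condition, the key observation is that for $h=(\ul{0},\bR_{\be_z,\alpha})\in H$ the adjoint action $\mathrm{Ad}(h)$ fixes $\cA_3$ (translation along the reference axis) and rotates the pair $(\cA_4,\cA_5)$ by the angle $\alpha$ among themselves; consequently both $\cA_3^2$ and $\cA_4^2+\cA_5^2$, and therefore $\mathcal{Q}$, are $\mathrm{Ad}(h)$-invariant. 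This makes $\mathcal{Q}$, and hence $e^{t\mathcal{Q}}$, commute with the right-regular action $R_h$, so that $\tPhi_t$ maps $\mathbb{L}_2^R(SE(3))$ into itself and satisfies $\tPhi_t\circ R_h=\tPhi_t=R_{h'}\circ\tPhi_t$ on $\mathbb{L}_2^R(SE(3))$. Together with the smoothing property (hypoellipticity of $\mathcal{Q}$ guarantees $\tPhi_t$ maps $\mathbb{L}_2$ into $\mathbb{L}_\infty$ for $t>0$) this lets me invoke Lemma~\ref{le:lemma}. Part~2 of the Lemma, $\tK(gh,qh')=\tK(g,q)$, applied with $q=e$ and $h'=e$ gives $\tp_t(gh)=\tp_t(g)$; combined with the left-invariance identity $\tK(g,q)=\tK(e,g^{-1}q)$ it also yields $\tp_t(h^{-1}g)=\tp_t(g)$, which is the second equation in (\ref{eq:symm1}). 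The first equation then follows from $p_t(\ul{y},\ul{n})=\frac{1}{2\pi}\tp_t(\ul{y},\bR_{\ul{n}})$ by choosing $\bR_{\bR_{\be_z,\alpha}\ul{n}}=\bR_{\be_z,\alpha}\bR_{\ul{n}}$ and applying left-$H$-invariance to $h(\ul{y},\bR_{\ul{n}})$ with $h=(\ul{0},\bR_{\be_z,\alpha})$.

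For (\ref{eq:symm2}) I would argue that $\tPhi_t$ is self-adjoint on $\mathbb{L}_2(SE(3))$. Since $SE(3)$ is unimodular, each left-invariant field is formally anti-self-adjoint ($\cA_i^*=-\cA_i$, by the vanishing of its Haar-divergence together with Leibniz' rule), so every square $\cA_i^2$ is self-adjoint and the nonnegative combination $\mathcal{Q}$ is self-adjoint; hence $e^{t\mathcal{Q}}$ is self-adjoint. For a convolution operator on a unimodular group this forces the kernel to satisfy $\tp_t(g)=\overline{\tp_t(g^{-1})}$, and as $\tp_t$ is real (indeed a positive probability density) we obtain $\tp_t(g)=\tp_t(g^{-1})$, the second equation in (\ref{eq:symm2}). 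The first equation is then a direct translation: using $(\by,\bR_{\bn})^{-1}=(-\bR_{\bn}^T\by,\bR_{\bn}^T)$, choosing $\bR_{\bR_{\bn}^T\be_z}=\bR_{\bn}^T$, and the correspondence above gives $p_t(-\bR_{\bn}^T\by,\bR_{\bn}^T\be_z)=\frac{1}{2\pi}\tp_t((\by,\bR_{\bn})^{-1})=\frac{1}{2\pi}\tp_t(\by,\bR_{\bn})=p_t(\by,\bn)$.

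The main obstacle I anticipate is the right-$H$-invariance of the generator: it is the only place where the explicit Euler-angle form of the left-invariant vector fields from \cite{Duits2011} is really needed, since one must check that $\mathrm{Ad}(\bR_{\be_z,\alpha})$ genuinely rotates $(\cA_4,\cA_5)$ as an $SO(2)$-block and leaves $\cA_3$ fixed, so that exactly this diffusion generator (and not a generic sum of squares) descends to the quotient $\quotpo$. The second delicate point is the passage from the self-adjoint generator to the kernel identity $\tp_t(g)=\tp_t(g^{-1})$, which relies essentially on unimodularity of $SE(3)$ (equality of left and right Haar measure and inversion-invariance of $\mathrm{d}q$); everything else is bookkeeping with the correspondence $p_t\leftrightarrow\tp_t$ and the already-proven Lemma~\ref{le:lemma}.
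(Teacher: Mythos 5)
Your proposal is correct, and its two halves have different relationships to the paper's proof. For (\ref{eq:symm1}) you do essentially what the paper does --- invoke Lemma~\ref{le:lemma} --- except that you additionally verify the legality of the scale space operator (left-invariance of the generator, plus $\mathrm{Ad}(H)$-invariance of $D_{33}\cA_3^2+D_{44}(\cA_4^2+\cA_5^2)$ because conjugation by $\bR_{\be_z,\alpha}$ fixes $A_3$ and rotates the pair $(A_4,A_5)$), a step the paper leaves implicit; that check is correct and worth making explicit. For (\ref{eq:symm2}) you take a genuinely different route: the paper argues via the reflectional invariance $(\mA_3,\mA_4,\mA_5)\mapsto(-\mA_3,-\mA_4,-\mA_5)$ of the generator together with the fact that reflection on the Lie algebra corresponds to inversion on the group, whereas you derive $\tp_t(g)=\tp_t(g^{-1})$ from self-adjointness of $e^{t\mathcal{Q}}$ on the unimodular group $SE(3)$, using $\cA_i^*=-\cA_i$ and the fact that a self-adjoint convolution operator has kernel satisfying $\tp_t(g)=\overline{\tp_t(g^{-1})}$. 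Your argument is arguably the more robust one: it applies to any formally symmetric hypoelliptic sum-of-squares generator on a unimodular group and makes the role of unimodularity explicit, while the paper's reflection argument is shorter but leans on the slightly delicate fact that $A\mapsto -A$ is only a Lie-algebra \emph{anti}-automorphism, matching the anti-automorphism $g\mapsto g^{-1}$. The only small gap in your version is the passage from formal symmetry of $\mathcal{Q}$ on test functions to genuine self-adjointness of the semigroup (essential self-adjointness of the sub-Laplacian); this is standard and is covered by the weighted subcoercive operator theory of ter Elst and Robinson \cite{terElst1998} already cited in the paper, so a one-line citation would close it.
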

\begin{proof}
The symmetry (\ref{eq:symm1}) is due to Lemma \ref{le:lemma}. The second symmetry (\ref{eq:symm2}) is due to reflectional invariance $(\mA_3,\mA_4,\mA_5) \mapsto (-\mA_3,-\mA_4,-\mA_5)$ in the diffusion (\ref{eq:CESE(3)}) and reflection on the Lie algebra corresponds to inversion on the group. $\hfill \Box$
\end{proof}
\begin{remark}\label{remark:symmetry}
Note that (\ref{eq:symm2}) in terms of $k$ would be: $k(\by,\bn,\mathbf{0},\be_z) = k(\mathbf{0},\be_z,\by,\bn)$, by the relation $k(\by,\bn,\by',\bn') = p(\bR_{\bn'}^{T}(\by-\by'),\bR_{\bn'}^{T}\bn)$. This means that $k$ defines a symmetric measure: evaluation in $(\by,\bn)$ of a kernel centered around the unity element $(\mathbf{0},\mathbf{e}_z)$ should be equal to evaluation in the unity element of a kernel centered around $(\by,\bn)$.
\end{remark}
Now the Gaussian kernel approximation, based on the logarithm on $SE(3)$ and the theory of coercive weighted operators on Lie groups \cite{terElst1998}, presented in earlier work \cite[ch:8,thm.11]{Duits2013} does not satisfy this symmetry. Next we will improve it by a new practical analytic approximation which does satisfy the property.

\subsection{New vs. Previous Kernel Approximations}\label{se:kernelapprox}
We first present two existing approximations for $p_t$ and $\tp_t$, that do not automatically carry over the properties we have shown in the previous section. A possible approximation kernel for $p_t$ is based on a direct product of two $SE(2)$-kernels, see \cite[Eq.(10), Eq.(11)]{rodrigues2010}, to which we refer as $p_t^{prev,1}$. This approximation is easy to use since it is defined in terms of the Euler angles of the corresponding orientations. However, in Fig.~\ref{fig:symmanalysis} we show that the symmetries described before are not preserved by $p_t^{prev,1}$ and errors tend to be larger when $D_{44}$ and $t$ increase. Therefore we move to an approximation for the $SE(3)$-kernel $\tp_t$, for which we show that it can be adapted such that it has the important symmetries. We need the logarithm on $SE(3)$ for this approximation, so first consider an exponential curve in $SE(3)$, given by $\tilde{\gamma}(t)= g_0 \exp \left(t \sum \limits_{i=1}^{6} c^{i}A_i\right)$. The logarithm $\log_{SE(3)}: SE(3) \to T_{e}(SE(3))$ is bijective, and it is given by
\begin{equation}
\log_{SE(3)}(g)=\log_{SE(3)}\left(\exp\left(\sum \limits_{i=1}^{6}c^i(g)A_i\right)\right) =\sum \limits_{i=1}^{6}c^i(g)A_i,
\end{equation}
and we can relate to this the vector of coefficients $\bc(g)=(\bc^{(1)}(g),\bc^{(2)}(g))=(c^{1}(g),\cdots, c^{6}(g))^{T}$. 
We define matrix $\OmegaBF$ as follows:
\begin{equation}
\OmegaBF := \left(\begin{array}{ccc}
0 & -c^6 & c^5 \\
c^6 & 0 & -c^4 \\
-c^5 & c^4 & 0
\end{array}
\right), \qquad \bR = e^{t \OmegaBF}, \qquad \bR \in SO(3).
\end{equation}
We can write $\bR$ in terms of Euler angles, $\bR = \bR_{\be_z,\gamma} \bR_{\be_y,\beta} \bR_{\be_z,\alpha}$. Let the matrix $\OmegaBF_{\gamma,\beta,\alpha}$ be such that
$\exp(\OmegaBF_{\gamma,\beta,\alpha})=\bR_{\be_z,\gamma} \bR_{\be_y,\beta} \bR_{\be_z,\alpha}$. The spatial coefficients $\bc^{(1)} = \bc^{(1)}_{\bx,\gamma,\beta,\alpha}$ are given by the following equation:
\begin{equation}\label{eq:c1}
\bc^{(1)}  = \left(I - \frac12 \OmegaBF_{\gamma,\beta,\alpha} + q^{-2}_{\gamma,\beta,\alpha} \left(1 - \frac{q_{\gamma,\beta,\alpha}}{2} \cot \left(\frac{q_{\gamma,\beta,\alpha}}{2} \right) \right)(\OmegaBF_{\gamma,\beta,\alpha})^2 \right) \bx,
\end{equation}
where $q_{\gamma,\beta,\alpha}$ is the (Euclidean) norm of $\bc^{(2)}$. Then another approximation for the kernel $\tp_t$ is given by \cite{Duits2011}:
\begin{equation}
  \tp^{log}_t(\bc(g)) = \frac{1}{(4\pi t^2 D_{33} D_{44})^2} e^{-\frac{|\log_{SE(3)}(g)|^2_{D_{33},D_{44}}}{4t}},
\end{equation}
  with the smoothed variant of the weighted modulus, \cite[Eq.78,79]{Duits2013}, given by
\begin{equation}
    |\log_{SE(3)}(\cdot)|_{D_{33},D_{44}} = \sqrt[4]{\frac{|c^1|^2+|c^2|^2}{D_{33}D_{44}}+\frac{|c^6|^2}{D_{44}}+\left(\frac{(c^3)^2}{D_{33}}+\frac{|c^4|^2 + |c^5|^2}{D_{44}}\right)^2}.
\end{equation}
Now the difficulty lies in the fact that the function $U$ is defined on the quotient for elements $(\ul{y}, \bR_{\bn})$, where the choice for $\alpha$ in the rotation matrix $\bR_{\bn}$ (mapping $\ul{e}_{z}$ onto $\ul{n} \in S^2$) is not of importance. However, the logarithm is only well-defined on the group $SE(3)$, not on the quotient $\R^{3}\rtimes S^{2}$, and explicitly depends on the choice of $\alpha$. It is therefore not straightforward to use this approximation kernel such that the invariance properties in Corollary~\ref{corr:1} are preserved. In view of Corollary~\ref{corr:1}, we need both left-invariance and right-invariance for $\tilde{p}_t(g)$ under the action of the subgroup $H$. As right-invariance is naturally included via $\tilde{p}_{t}(\ul{y},R)=p_{t}(\ul{y},R \ul{e}_{z})$, left-invariance is equivalent to inversion invariance. In previous work the choice of $\alpha=0$ is taken, giving rise to the approximation
\begin{equation}
p_t^{prev,2}(\by,\bn(\beta,\gamma)) = \tp_t^{log}(\bc(\by,\bR_{\be_z,\gamma}\bR_{\be_y,\beta})).
\end{equation}
However, this section is not invariant under inversion, as is pointed out in Fig.~\ref{fig:symmanalysis}. In contrast, we propose to take the section $\alpha=-\gamma$, which is invariant under inversion (since 
$(\ul{R}_{\ul{e}_{z},\gamma}\ul{R}_{\ul{e}_{y},\beta}\ul{R}_{\ul{e}_{z},-\gamma})^{-1}=
 \ul{R}_{\ul{e}_{z},\gamma}\ul{R}_{\ul{e}_{y},-\beta}\ul{R}_{\ul{e}_{z},-\gamma}$).
Moreover, this choice for estimating the kernels in the group is natural, as it provides the
weakest upper bound kernel since by direct computation one has \mbox{$\alpha=-\gamma \Rightarrow c_{6}=0$}.
Finally, this choice indeed provides us the correct symmetry for the Gaussian approximation of $p_t(\ul{y},\ul{n})$ as stated in the following theorem.

\begin{figure}[t!]
  \centering
  \includegraphics[width=\textwidth]{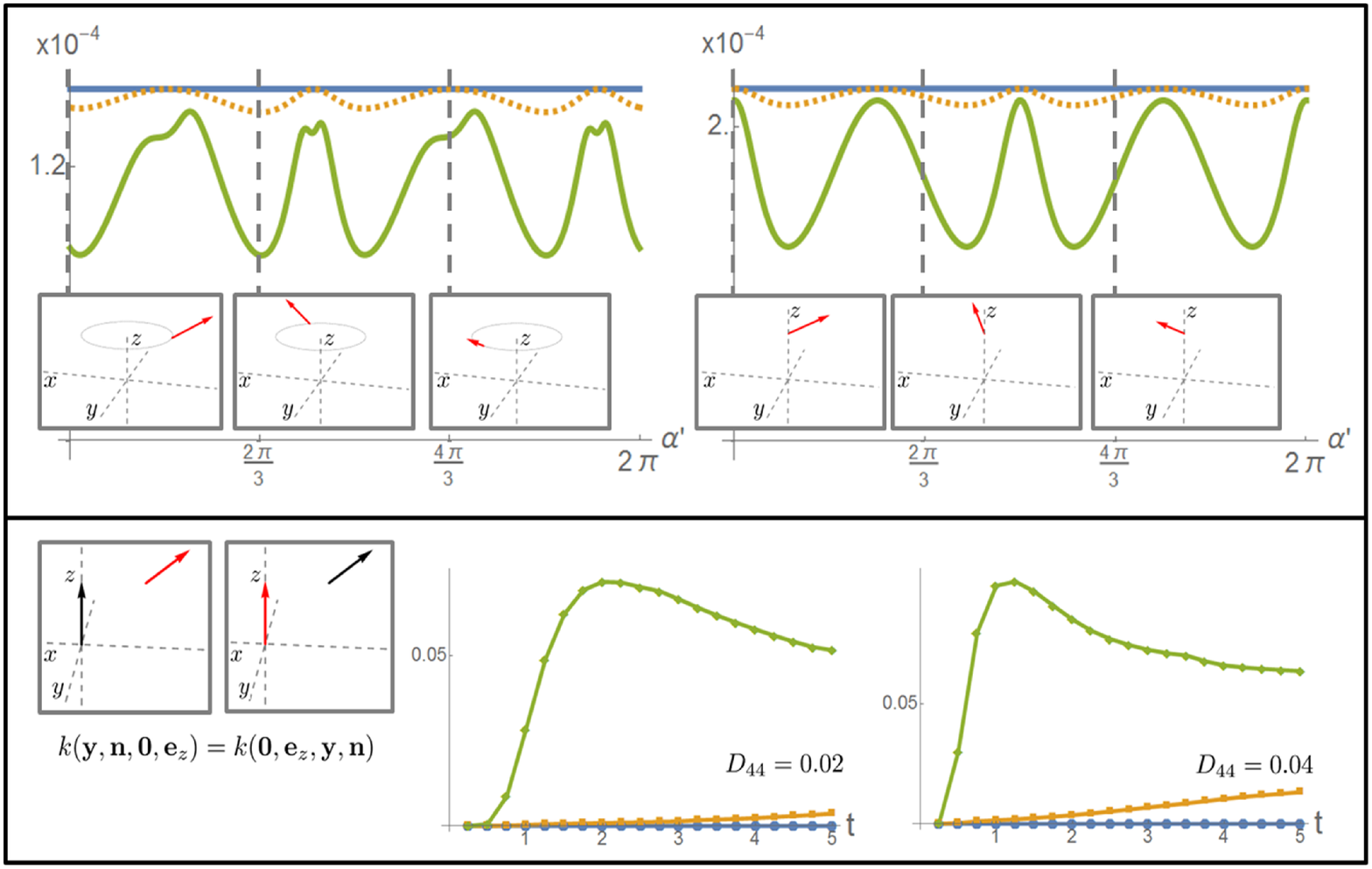}
  \caption{\emph{Top}: 2 positions and orientations, rotated around the $z$-axis. The graphs display $p_t(\bR_{\be_z,\alpha'}\by,\bR_{\be_z,\alpha'}\bn)$, for $p^{prev,1}_t$ in green, $p_t^{prev,2}$ in orange and $p_t^{new}$ in blue. Parameters $D_{33} = 1, D_{44}=0.02, t = 4$. \emph{Bottom}: $k(\by,\bn,\mathbf{0},\be_z)$ should be equal to $k(\mathbf{0},\be_z,\by,\bn)$, see Remark \ref{remark:symmetry}. The absolute sum of the difference of the two evaluations on a $5\times5\times5$ grid with at each point 42 uniformly distributed orientations is shown for $k^{prev,1}$, $k^{prev,2}$ and $k^{new}$, corresponding to $p^{prev,1}_t$, $p_t^{prev,2}$ and $p_t^{new}$, respectively, with coloring as above. Parameter $D_{33}= 1$ and $t, D_{44}$ are as shown.} 
\label{fig:symmanalysis}
\end{figure}

\begin{theorem}
When the approximate kernel $p_t^{new}$ on the quotient is related to the approximate kernel on the group $\tp_t^{log}$ by
\begin{equation}\label{eq:defappkernel}
p_t^{new}(\by,\bn(\beta,\gamma)) := \tp_t^{log}(\bc(\by,\bR_{\be_z,\gamma}\bR_{\be_y,\beta}\bR_{\be_z,-\gamma})),
\end{equation}
i.e. we make the choice $\alpha = -\gamma$, we have the desired $\alpha$-left-invariant property
\begin{equation}
p_t^{new}(\by,\bn) = p_t^{new}(\bR_{\be_z,\alpha'}\by, \bR_{\be_z,\alpha'}\bn), \qquad \alpha' \in [0,2\pi].
\end{equation}
and the symmetry property
\begin{equation}\label{symm}
p_t^{new}(\by,\bn) = p_t^{new}(-\bR^T_{\bn}\by,\bR_{\bn}^T\be_{z})
\end{equation}
\end{theorem}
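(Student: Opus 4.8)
The plan is to exploit that $\tp_t^{log}$ depends on its argument $g$ only through the weighted modulus $|\log_{SE(3)}(g)|_{D_{33},D_{44}}$, which is an \emph{even} function of the coordinate vector $\bc(g)=(c^1,\dots,c^6)$ and is assembled solely from the four rotation-invariant combinations $|c^1|^2+|c^2|^2$, $(c^3)^2$, $|c^4|^2+|c^5|^2$ and $|c^6|^2$. Both claimed properties then reduce to tracking how the chosen section representative $g=(\by,\bR_{\be_z,\gamma}\bR_{\be_y,\beta}\bR_{\be_z,-\gamma})$ transforms and checking that the weighted modulus is unchanged.

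For the $\alpha'$-left-invariance I would first note that $\bR_{\be_z,\alpha'}\bn(\beta,\gamma)=\bn(\beta,\gamma+\alpha')$, so the representative attached to $(\bR_{\be_z,\alpha'}\by,\bR_{\be_z,\alpha'}\bn)$ is $g'=(\bR_{\be_z,\alpha'}\by,\bR_{\be_z,\gamma+\alpha'}\bR_{\be_y,\beta}\bR_{\be_z,-(\gamma+\alpha')})$. A short computation with the group product shows $g'=aga^{-1}$ for $a=(\mathbf{0},\bR_{\be_z,\alpha'})$, i.e. the two representatives are conjugate by a rotation about $\be_z$. Since $\log_{SE(3)}(aga^{-1})=\mathrm{Ad}_a(\log_{SE(3)}(g))$ and $\mathrm{Ad}_a$ for $a=(\mathbf{0},\bR_{\be_z,\alpha'})$ rotates both the translational block $(c^1,c^2,c^3)$ and the rotational block $(c^4,c^5,c^6)$ of $\bc(g)$ by $\bR_{\be_z,\alpha'}$, each of the four invariant combinations above is preserved. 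Hence $|\log_{SE(3)}(g')|_{D_{33},D_{44}}=|\log_{SE(3)}(g)|_{D_{33},D_{44}}$ and therefore $p_t^{new}(\bR_{\be_z,\alpha'}\by,\bR_{\be_z,\alpha'}\bn)=p_t^{new}(\by,\bn)$.

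For the inversion symmetry I would first record that $\tp_t^{log}$ is itself inversion-invariant on the group: $\log_{SE(3)}(g^{-1})=-\log_{SE(3)}(g)$ gives $\bc(g^{-1})=-\bc(g)$, and since the weighted modulus is even this yields $\tp_t^{log}(\bc(g^{-1}))=\tp_t^{log}(\bc(g))$. The real content is that the section $\alpha=-\gamma$ is \emph{closed} under inversion. Writing $g=(\by,\bR_{\bn})$ with $\bR_{\bn}=\bR_{\be_z,\gamma}\bR_{\be_y,\beta}\bR_{\be_z,-\gamma}$, one has $g^{-1}=(-\bR_{\bn}^T\by,\bR_{\bn}^T)$, and the algebraic identity $\bR_{\bn}^T=\bR_{\be_z,\gamma}\bR_{\be_y,-\beta}\bR_{\be_z,-\gamma}$ shows that $\bR_{\bn}^T$ is exactly the section representative of the orientation $\bn''=\bR_{\bn}^T\be_z=\bn(-\beta,\gamma)$, because $\bn''$ has the same azimuth $\gamma$ and hence still satisfies $\alpha''=-\gamma''$. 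Thus $g^{-1}$ is literally the section representative of $(-\bR_{\bn}^T\by,\bR_{\bn}^T\be_z)$, so $p_t^{new}(-\bR_{\bn}^T\by,\bR_{\bn}^T\be_z)=\tp_t^{log}(\bc(g^{-1}))=\tp_t^{log}(\bc(g))=p_t^{new}(\by,\bn)$.

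The main obstacle, and the step that genuinely uses the choice $\alpha=-\gamma$, is this closedness of the section under inversion: only because the transpose of $\bR_{\be_z,\gamma}\bR_{\be_y,\beta}\bR_{\be_z,-\gamma}$ is again of the same symmetric form $\bR_{\be_z,\gamma}\bR_{\be_y,-\beta}\bR_{\be_z,-\gamma}$, with unchanged $\gamma$, does the group inverse land back in the chosen section. For the previous choice $\alpha=0$ the transpose $\bR_{\be_y,-\beta}\bR_{\be_z,-\gamma}$ is not of the form $\bR_{\be_z,\gamma'}\bR_{\be_y,\beta'}$, the inverse leaves the section, and the symmetry is destroyed, which is exactly the failure of $p_t^{prev,2}$ illustrated in Fig.~\ref{fig:symmanalysis}. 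Verifying the two trigonometric identities $\bR_{\be_z,\alpha'}\bn(\beta,\gamma)=\bn(\beta,\gamma+\alpha')$ and the transpose identity, together with the explicit action of $\mathrm{Ad}_{(\mathbf{0},\bR_{\be_z,\alpha'})}$ on $\bc$, is the only routine computation required.
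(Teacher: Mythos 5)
Your proof is correct and follows essentially the same route as the paper: for the $\alpha'$-invariance the paper also shows that the coefficient vector $\bc$ transforms by the block rotation $\mathrm{diag}(\bR_{\be_z,\alpha'},\bR_{\be_z,\alpha'})$ (you package this as conjugation $g'=aga^{-1}$ and the adjoint action, while the paper derives the rotational part from $\OmegaBF_{\gamma+\alpha',\beta,-(\gamma+\alpha')}=\bR_{\be_z,\alpha'}\OmegaBF_{\gamma,\beta,-\gamma}\bR_{\be_z,\alpha'}^{-1}$ and the spatial part from the explicit formula for $\bc^{(1)}$), and for the inversion symmetry both arguments rest on the same three facts, namely $\bc(g^{-1})=-\bc(g)$, evenness of the weighted modulus, and closedness of the section $\alpha=-\gamma$ under inversion.
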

\begin{proof}
We start by proving the $\alpha$-invariance. Following definition (\ref{eq:defappkernel}) we have
\[
p_t^{new}(\bR_{\be_z,\alpha'}\by, \bR_{\be_z,\alpha'}\bn(\beta,\gamma)) = \tp_t^{log}(\bc(\bR_{\be_z,\alpha'}\by, \bR_{\be_z,\gamma+\alpha'}\bR_{\be_y,\beta}\bR_{\be_z,-(\gamma + \alpha')}))
\]
Recall that the matrix $\OmegaBF_{\gamma,\beta,\alpha}$ is defined by $\bR_{\gamma,\beta,\alpha} := \bR_{\be_z,\gamma}\bR_{\be_y,\beta}\bR_{\be_z,\alpha}= e^{\OmegaBF_{\gamma,\beta,\alpha}}$. Therefore in our case, where we choose $\alpha = -\gamma$, we find for all $\alpha'$:
\[
\begin{array}{lr}
\bR_{\be_y,\beta} &= \bR_{\be_z,\gamma + \alpha'}^{-1} e^{\OmegaBF_{\gamma + \alpha',\beta,-(\gamma + \alpha')}}\bR_{\be_z,\gamma + \alpha'} = e^{\bR_{\be_z,\gamma + \alpha'}^{-1} (\OmegaBF_{\gamma+\alpha',\beta,-(\gamma+\alpha')}) \bR_{\be_z,\gamma+\alpha'}} \\&= e^{\bR_{\be_z,\gamma }^{-1} \bR_{\be_z,\alpha'}^{-1} (\OmegaBF_{\gamma+\alpha',\beta,-(\gamma+\alpha')}) \bR_{\be_z,\alpha'} \bR_{\be_z,\gamma}}
\end{array}
\]
We see that $\bR_{\be_z,\alpha'} (\OmegaBF_{\gamma,\beta,-\gamma})\bR_{\be_z,\alpha'}^{-1}  = \OmegaBF_{\gamma+\alpha',\beta,-(\gamma+\alpha')}$. From this we deduce:
\begin{equation}\label{eq:relationc2}
\begin{array}{l}
\bc^{(2)}_{\gamma+\alpha',\beta,-(\gamma+\alpha')} = \bR_{\be_z,\alpha'} \bc^{(2)}_{\gamma,\beta,-\gamma}, \textrm{ and } 
q_{\gamma+\alpha',\beta,-(\gamma+\alpha')} = q_{\gamma,\beta,-\gamma} =:q,
\end{array}
\end{equation}
and together with (\ref{eq:c1}) it gives $\bc^{(1)}_{\bR_{\be_z,\alpha'}\bx,\gamma+\alpha',\beta,-(\gamma+\alpha')} = \bR_{\be_z,\alpha'} \bc^{(1)}_{\bx,\gamma,\beta,-\gamma}$. Combining this with (\ref{eq:relationc2}) gives
\begin{equation}
\bc_{\bR_{\be_z,\alpha'}\bx,\gamma+\alpha',\beta,-(\gamma+\alpha')} = Z^T_{\alpha'} \bc_{\bx,\gamma,\beta,-\gamma}, \textrm{ with } Z_{\alpha'} = \left(\begin{array}{cc} \bR_{\be_z,\alpha'}^T & 0 \\ 0 & \bR_{\be_z,\alpha'}^T \end{array}\right)
\end{equation}
It follows immediately that $c_1^2 + c_2^2$, $c_3$, $c_4^2 + c_5^2$ and $c_6$ are independent of $\alpha'$. The proof for $\alpha$-invariance is completed by stating that given $\alpha' \in [0,2\pi]$:
\[
\begin{array}{l}
p_t^{new}(\bR_{\be_z,\alpha'}\by, \bR_{\be_z,\alpha'}\bn(\beta,\gamma)) = \tp_t^{log}(\log_{SE(3)}(\bR_{\be_z,\alpha'}\by,\ul{R}_{\gamma+\alpha',\beta,-(\gamma+\alpha')}))=\\
\tp_t^{log}(Z_{\alpha'}^T\log_{SE(3)}(\by,\ul{R}_{\gamma,\beta,-\gamma}))= \tp_t^{log}(\log_{SE(3)}(\by,\ul{R}_{\gamma,\beta,-\gamma}))= p_t^{new}(\by,\bn(\beta,\gamma)).
\end{array}
\]
The symmetry property (\ref{symm}) directly follows from the fact that
\[g^{-1}=\exp_{SE(3)}\left(-\sum \limits_{i=1}^{6}c^{i} A_i\right) =
(\ul{y},\ul{R})^{-1}=(-\ul{R}^{-1}\ul{y},\ul{R}^{-1}),
\] 
the fact that our weighted modulus
on $T_{e}(SE(3))$
is invariant under reflection $\ul{c}\mapsto (-\ul{c})$, and the fact that the section $\alpha=-\gamma$ is invariant under inversion. 
$\hfill \Box$ \\
\end{proof}

\section{Neuroimaging Applications}\label{se:applications}
The newly proposed enhancement kernel is used within the pipeline depicted in Fig.~\ref{fig:pipeline} for further processing of the dMRI data. There are two places where this is useful: for enhancement of the FOD obtained with e.g. CSD, and for quantifying the coherence of a fiber within a bundle. 

\textbf{a. Enhancing FODs:}
To illustrate the use of enhancement of FODs, we use the artifical IEEE ISBI 2013 Reconstruction Challenge dataset \cite{Daducci2013}. CSD is applied to a simulated dMRI signal, yielding an FOD as in Fig.~\ref{fig:effect_enhancement}. The tractography was done with MRtrix \cite{Tournier2012}, with seed points randomly chosen in the bundle. Only the fibers that pass through the indicated spheres are shown. The alignment of glyphs improves due to the enhancements and this results in a better tracking: bundles are reconstructed fuller and less fibers take a wrong exit from the bundles. 
\begin{figure}[t!]
  \centering
  \includegraphics[width=\textwidth]{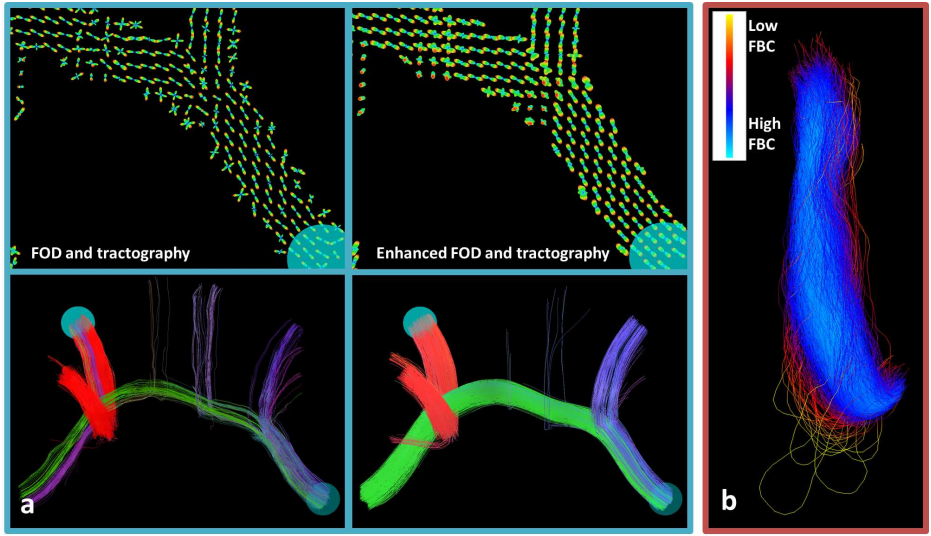}
\caption{\emph{Blue frame:} Glyph field visualization, recall Fig. \ref{fig:pipeline}, of the FOD obtained with CSD from an artificial dataset (top left) can be improved significantly with enhancements (top right). Especially alignment on the boundary of the bundle is improved. This results in a more plausible tractography result (bottom). Here, $D_{33}=1$, $D_{44}=0.02$, $t=4$. \emph{Red frame:} example of the coherence quantification of a fiber bundle, the Optic Radiation.}\label{fig:effect_enhancement}
\end{figure}

\textbf{b. Coherence quantification of tractographies:}
The result of a probabilistic tractography is typically less well-structured than in the deterministic case, recall Fig.~\ref{fig:pipeline}. We employ our new approximation kernel $p_t^{new}$ to construct an $\quotpo$ scale space representation of the set of fibers from a tractography. This density is locally low for a fiber that (partly) deviates in position or orientation from the other fibers in the bundle, as we explain next.

Suppose we have a set of $N$ streamlines (fibers), the $i^{th}$ fiber having $N_i$ points $\by_i^j \in \mathbb{R}^3$ and we set the orientation $\bn_i^j = (\by_{i+1}^j - \by_i^j)/||\by_{i+1}^j-\by_i^j ||\in S^2$. Next we use the set of streamline points $\left\{ (\by_i^j,\pm \bn_i^j) \; | \; j = 1,\dots N_i, \; i=1,\dots,N\right\}$ as follows in the initial condition of our scale space evolution (\ref{eq:CEquotient}):

\begin{equation}
U(\by,\bn) = \frac{1}{N_{tot}} \sum_{\sigma = 1}^2 \sum_{i=1}^N \sum_{j=1}^{N_i} \delta_{(\by_i^j,(-1)^{\sigma}\bn_i^j)}(\by,\bn),
\end{equation} 
with $N_{tot}$ the total number of fiber points and  $\delta_{(\by,\bn)}$ a $\delta$-distribution in $\quotpo$, centered around $(\by,\bn)$. The solution is given by the convolution $W(\cdot,t) = (p_t^{new} \ast_{\quotpo} U)(\cdot)$. Intuitively, letting this sum of $\delta$-distributions diffuse according to (\ref{eq:CEquotient}) results in a density indicating for each position and orientation how well it is aligned, in the coupled $\quotpo$-sense, with the fiber bundle. 

In practice, we evaluate the convolution $(p_t^{new} \ast_{\quotpo} U)$ only in the fiber points, giving $N_{tot}^2$ kernel evaluations. However, this is reduced to $N_{tot}^2/2$ evaluations, since the order of arguments in $k^{new}$ is irrelevant for every pair of fiber points, thanks to the included symmetry in the kernel. This implies a reduction of the computation time. Summation over one fiber then gives a value, which is the final measure for the fiber to bundle coherence (FBC). Fibers with low FBC can be removed from the tractography result and coherent fibers remain. A local FBC can be computed by summing over parts of the fibers instead of the entire fiber. Fig.~\ref{fig:effect_enhancement} shows this local FBC for a tractography of the Optic Radiation, a brain white matter bundle connecting the Lateral Geniculate Nucleus and the visual cortex. It can be seen that where fibers deviate from the bundle, the FBC is lower and hence they could be excluded.

\section{Conclusion}
We have introduced a new approximation for the kernel of a linear scale space PDE (\ref{eq:CEquotient}) on $\quotpo$. This is done by the embedding of $\quotpo$ into $SE(3)$, on which the PDE coincides with the Fokker-Planck equation for a Brownian motion process. Due to this embedding, the kernel is subject to two constraints, recall Corollary \ref{corr:1}. We have shown the application of the kernel in two dMRI neuroimaging applications. First, we have used the kernel implementation in combination with CSD to enhance fiber structures in dMRI and thereby improve tractography results. Secondly, the kernel was used to construct a measure for coherence of fibers in a fiber bundle. Thanks to the appropriate symmetries of the scale space kernels, this measure is based on symmetric distances on $\quotpo$ and a reduction of computation time is obtained. In future work we aim to quantify the improvement in tractography due to the enhancements, and we pursue the use of the FBC for better construction of the structural connectome \cite{rodrigues2013}.

\end{document}